\newtheorem{theorem}{Theorem}
\newtheorem{definition}[theorem]{Definition}
\newtheorem{remark}[theorem]{Remark}
\newtheorem{example}[theorem]{Example}
\title{\textbf{Connection Cochain of Abelian Extensions and Connection $1$-Forms}}
\author{DING BOSHU}
\begin{document}

\maketitle

\begin{abstract}
   \noindent  In this paper, we consider the concept of connection cochain of central extensions introduced by Moriyoshi and apply it to the abelian case. We will show the relationship between connection cochain and connection $1$-form of a principal bundle whose structure group is abelian.
\end{abstract}

\section{Introduction}

In [1], Moriyoshi has introduced the concept of the \textbf{connection cochain} with respect to central extensions. 
\begin{definition} \normalfont \cite{moriyoshi}
   Suppose that there is a central extension $\xymatrix{1\ar[r] &  A \ar[r] &  \tilde G \ar[r] &  G \ar[r] &  1}$. A cochain $\tau:G\to A$ that satisfies the condition
   \begin{align*}
     \tau(ga)=\tau(g)+a
   \end{align*}
   for $g\in G$ and $a\in A$ is called a \textbf{connection cochain}.
\end{definition}
\noindent In Definition 3, we apply this idea to the abelian extensions. From Remark 4 to Remark $7$, we review some basic concepts of abelian extensions. In Example 8, we study the abelian extension
\begin{align*}
         \xymatrix{
            1 \ar[r]  &   C^\infty(P,G)^G \ar[r]^-\iota  &   \text{Aut}(P)  \ar[r]^-\rho  &  \text{Diff}(M)_P \ar[r]  &  1 
         }
     \end{align*}
     corresponding to a principal $G$-bundle, where $G$ is an abelian group. We give an explicit expression of the action of $\text{Diff}(M)_P$ on $C^\infty(P,G)^G$ in Equation (4), and the corresponding infinitesimal action in Equation (5). With the constructions above, we show
     \begin{theorem}[Theorem 9]
\normalfont
  Let $\tau:\text{Aut}(P)\to C^\infty(P,G)^G$ be a conenction cochain of abelian extension with respect to (3) and $\tau_{*,1_{\text{Aut}(P)}}:\mathfrak X(P)^r\to C^\infty(P,\mathfrak g)^G$ its differential map over $1_{\text{Aut}(P)}$. If for all $X\in\mathfrak X(P)^r$ with $X_p=0$, 
  \begin{align}
      \tau_{*,1_{\text{Aut}(P)}}(X)(p)=0,
  \end{align}
  then ${l_{\tau(1_{\text{Aut}(P)})^{-1}}}_* \tau_{*,1_{\text{Aut}(P)}}$ is a connection $1$-form over $P$.
\end{theorem}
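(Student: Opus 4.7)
The plan is to construct the connection $1$-form in two stages and then verify the two defining axioms. Write $\omega := (l_{\tau(1_{\text{Aut}(P)})^{-1}})_* \circ \tau_{*,1_{\text{Aut}(P)}} \colon \mathfrak X(P)^r \to C^\infty(P,\mathfrak g)^G$ for the $\mathbb R$-linear map appearing in the conclusion. The first stage is to show that hypothesis (1) forces $\omega$ to descend to a pointwise, smooth $\mathfrak g$-valued $1$-form on $P$; the second stage is to verify the two axioms of a principal $G$-connection in the abelian setting, namely $\omega(X^*) = X$ for every fundamental vector field and $R_g^*\omega = \omega$.

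For the first stage, I fix $p \in P$ and define $\omega_p(v) := \omega(X)(p)$ for any $G$-invariant $X \in \mathfrak X(P)^r$ with $X_p = v$. Well-definedness is exactly where hypothesis (1) enters: two extensions $X, Y$ satisfy $(X - Y)_p = 0$, so by $\mathbb R$-linearity of $\tau_{*,1_{\text{Aut}(P)}}$ and (1) we obtain $\omega(X)(p) = \omega(Y)(p)$. Existence of a $G$-invariant extension of an arbitrary $v \in T_p P$ follows from the local product structure of $P$ combined with a bump function. Smoothness of the resulting $1$-form reduces to $C^\infty(M)$-linearity of $\omega$, which is again (1) applied to $fX - f(p)X$ at $p$ for $f \in C^\infty(M)$.

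For the second stage, axiom (a) is obtained by differentiating the cochain identity $\tau(\phi \cdot \iota(a)) = \tau(\phi) \cdot a$ in $a$ at $(\phi, a) = (1, 1)$, which yields $\tau_{*,1_{\text{Aut}(P)}} \circ \iota_* = (l_{\tau(1_{\text{Aut}(P)})})_*$ under the identification $T_1 C^\infty(P, G)^G \cong C^\infty(P, \mathfrak g)^G$. Composition with $(l_{\tau(1_{\text{Aut}(P)})^{-1}})_*$ then gives $\omega \circ \iota_* = \mathrm{id}$; applying this to the constant function $X \in \mathfrak g \subset C^\infty(P, \mathfrak g)^G$, whose image under $\iota_*$ is the fundamental vector field $X^*$, yields $\omega(X^*) = X$ pointwise. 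Axiom (b) is essentially free: since the target is $C^\infty(P, \mathfrak g)^G$ and $G$ is abelian (so the adjoint action is trivial), each $\omega(X)$ is $G$-invariant as a function on $P$, which together with $G$-invariance of $X$ itself produces $\omega_{p \cdot g}(R_{g*} X_p) = \omega(X)(p \cdot g) = \omega(X)(p) = \omega_p(X_p)$.

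The main obstacle lies in the first stage. Axiom verification in stage two is essentially a direct translation of the cochain identity and the equivariance built into the kernel; what actually requires care is showing that hypothesis (1) is strong enough to produce a pointwise, smooth $1$-form from a map defined only on global $G$-invariant vector fields. Before even applying (1), one must be completely clear about the Lie-theoretic identifications $T_{1_{\text{Aut}(P)}} \text{Aut}(P) \cong \mathfrak X(P)^r$ and $T_1 C^\infty(P, G)^G \cong C^\infty(P, \mathfrak g)^G$, and about how $\iota_*$ intertwines constant $\mathfrak g$-valued functions with fundamental vector fields — this is the bookkeeping on which the entire argument rests.
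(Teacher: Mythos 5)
Your proposal is correct and follows essentially the same route as the paper: well-definedness of the pointwise form via hypothesis (1) and right-invariant extensions, the fundamental-vector-field axiom via the cochain identity $\tau(\tilde g a)=\tau(\tilde g)+a$ applied to constant functions (whose $\iota_*$-image is the fundamental vector field, right-invariant because $G$ is abelian), and $R_g$-invariance from the $G$-invariance of values in $C^\infty(P,\mathfrak g)^G$. Your additional remarks on tensoriality ($C^\infty(M)$-linearity) and smoothness go slightly beyond what the paper writes down, but the core argument is the same.
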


\section{Preliminary}

\begin{definition}
\normalfont For an abelian group extension
  \begin{align}
     \xymatrix{
         1 \ar[r] &   A \ar[r] &   \tilde G \ar[r]  &   G \ar[r]  &    1
       }
  \end{align}
  a \textbf{connection cochain of abelian extensions} is a map $\tau:\tilde G\to A$ such that
  \begin{align*}
         \tau(\tilde ga)=\tau(\tilde g)+a,
  \end{align*}
  for all $\tilde g\in \tilde G$ and $a\in A$.
\end{definition}

\begin{remark}
\normalfont
 Define $\mu:G\to\text{Diff}(A)$ by
\begin{align}
       \mu(g)a=\tilde g^{-1}a\tilde g,
\end{align}
where $a\in A$, $g\in G$ and $\tilde g$ is a lift of $g$ in $\tilde G$. Note that (3) is independent of the choice of the lifts. Indeed, let $\tilde g'$ be another lift of $g$. Then, since
\begin{align*}
   \pi(\tilde g'\tilde g^{-1})=gg^{-1}=1_G.
\end{align*}
we see that there exists $a'\in A$ such that 
\begin{align*}
   \tilde g'\tilde g^{-1}=a'.
\end{align*}
It follows that 
\begin{align*}
   \tilde g'^{-1}a\tilde g'=\tilde g^{-1}a'^{-1}aa'\tilde g=\tilde g^{-1}a\tilde g.
\end{align*}
It's easy to see that $\mu$ induces an action of $G$ on $A$. When (2) is a Lie group extension, then $\mu$ is a smooth action. Similarly, we can define an action of $\tilde G$ on A given by
\begin{align}
     \nu(\tilde g)a=\tilde g^{-1}a\tilde g.
\end{align}
These action (3) and (4) induces the coboundary map of the Lie group cochains $C^*(G,A)$ and $C^*(\tilde G,A)$, respectively.
\end{remark}

\begin{remark}
  \normalfont
   If (2) is a Lie group extension, then there exists the following corresponding Lie algebra extension
   \begin{align*}
     \xymatrix{
     0 \ar[r]  &     \mathfrak a  \ar[r]  &  \tilde{\mathfrak g} \ar[r]  &  \mathfrak g  \ar[r]  & 0
     }
   \end{align*}
   The infinitesimal action of (3) is the differential $\mu_{*,1_G}:\mathfrak g\to\mathfrak X(A)$ given by
   \begin{align*}
        \mu_{*,1_G}(V)a=\frac d{dt}\bigg|_{t=0}\mu(c_V(t))a
   \end{align*}
   where $V\in \mathfrak g$, $a\in A$ and $c_V(t)$ is a curve in $G$ with initial vector $V$. Note that the curve induced by the exponential map $\exp(tV)$ may be a choice of $c_V(t)$. Similarly, we have the action $\nu_{*,1_{\tilde G}}$ corresponding to $\nu$. These actions may be used to give the boundary map of the associated Lie algebra cohomology.
\end{remark}

\begin{remark}
   \normalfont
   The abelian extension (2) induces a $2$-cocycle $\chi:G\times G\to A$ associated with a section $s:G\to \tilde G$, given by
   \begin{align*}
        (g_1,g_2)\mapsto s(g_1g_2)^{-1}s(g_1)s(g_2),
   \end{align*}
   called the \textbf{Euler cocycle}. The terminology $2$-cocycle means that
   \begin{align*}
          \chi(g_2,g_3)-\chi(g_1g_2,g_3)+\chi(g_1,g_2g_3)-\mu(g_3)\chi(g_1,g_2)=0.
   \end{align*}
\end{remark}

\begin{remark}
   \normalfont
    Let $G\times_\chi A$ be the group with multiplication
    \begin{align*}
         (g_1,a_1)(g_2,a_2)=(g_1g_2,\chi(g_1,g_2)+\mu(g_2)a_1+a_2),
    \end{align*}
    the group structure of which is ensured by the fact that $\chi$ is a $2$-cocycle. It's easy to see that $G\times_\chi A$ is isomorphic to $\tilde G$. Indeed, define $\Phi:G\times_\chi A\to\tilde G$ by
    \begin{align*}
        (g,a)\mapsto s(g)a
    \end{align*}
    where $s:G\to \tilde G$ is the section inducing the Euler cocycle. Since
    \begin{align*}
       \Phi((g_1,a_1)(g_2,a_2))&=\Phi(g_1g_2,\chi(g_1,g_2)+\mu(g_2)a_1+a_2)\\
       &=s(g_1g_2)s(g_1g_2)^{-1}s(g_1)s(g_2)s(g_2)^{-1}a_1s(g_2)a_2  \\
       &=s(g_1)a_1s(g_2)a_2 \\
       &=\Phi(g_1,a_1)\Phi(g_2,a_2),
    \end{align*}
    we see that $\Phi$ is a homomorphism. On the other hand, define $\Psi:\tilde G\to G\times_\chi A$ by
    \begin{align*}
       \tilde g\mapsto (g,s(g)^{-1}\tilde g).
    \end{align*}
    Since
    \begin{align*}
    \Phi\circ\Psi(\tilde g)=\Phi(g,s(g)^{-1}\tilde g)=s(g)s(g)^{-1}\tilde g=\tilde g,
    \end{align*}
    and
    \begin{align*}
       \Psi\circ\Phi(g,a)=\Psi(s(g)a)=(g,a),
    \end{align*}
    we deduce that $\Phi$ is an isomorphism.
\end{remark}

\begin{example}
\normalfont
     Let $G$ be an abelian Lie group and $\pi:P\to M$ a principal $G$-bundle. Then, we have the following abelian Lie group extension
     \begin{align}
         \xymatrix{
            1 \ar[r]  &   C^\infty(P,G)^G \ar[r]^-\iota  &   \text{Aut}(P)  \ar[r]^-\rho  &  \text{Diff}(M)_P \ar[r]  &  1 
         }
     \end{align}
     where 
     \begin{align*}
     C^\infty(P,G)^G=\{\gamma\in C^\infty(P,G)\mid \gamma(pg)=\gamma(p)\text{ for all $p\in P$ and $g\in G$}\},
     \end{align*}
     and 
     \begin{align*}
     \text{Aut}(P)=\{\varphi\in \text{Diff}(P)\mid \varphi(pg)=\varphi(p)g\text{ for all $p\in P$ and $g\in G$}\}.
     \end{align*}
     Moreover, $\iota:C^\infty(P,G)^G\to\text{Aut}(P)$ is given by
     \begin{align*}
          \iota(\gamma)(p)=p\gamma(p)
     \end{align*}
    for $p\in P$, and $\rho:\text{Aut}(P)\to\text{Diff}(M)_P$ is given by
     \begin{align*}
         \rho(\varphi)(x)=\pi(\varphi(p)),
     \end{align*}
     where $x\in M$ and $p$ is a lift of $x$ in $P$. Note that $\text{Diff}(M)_P=\text{Im }\rho$. The corresponding Lie algebra extension is
     \begin{align*}
     \xymatrix{
          0  \ar[r]  &    C^\infty(P,\mathfrak g)^G  \ar[r]^-{\iota_*} & \mathfrak X(P)^r  \ar[r]^-{\rho_*}  &   \mathfrak X(M) \ar[r] &  0
          }
     \end{align*}
     where 
     \begin{align*}
         \mathfrak X(P)^r=\{X\in\mathfrak X(P)\mid {r_g}_*X_p=X_{pg}\text{ for all $p\in P$ and $g\in G$}\},
     \end{align*}
     called the \textbf{right-invariant vector field} over $P$. Indeed, for any curve $\varphi(t)$ in $\text{Aut}(P)$ through $1_{\text{Aut}(P)}$, we have
     \begin{align*}
               {r_g}_*\frac d{dt}\bigg|_{t=0}\varphi(p,t)=\frac d{dt}\bigg|_{t=0}\varphi(p,t)g=\frac d{dt}\bigg|_{t=0}\varphi(pg,t),
     \end{align*}
   which implies that $\text{Lie}(\text{Aut}(P))=\mathfrak X(P)^r$. Moreover, 
   \begin{align*}
        C^\infty(P,\mathfrak g)^G=\{\gamma\in C^\infty(P,\mathfrak g)\mid \gamma(pg)=\gamma(p)\text{ for all $p\in P$ and $g\in G$}\},
   \end{align*}
   which is the Lie algebra of $C^\infty(P,G)^G$. The maps $\iota_*$ and $\rho_*$ are the differentials of $\iota$ and $\rho$ over the identities of the corresponding groups, respectively. The action of $\text{Diff}(M)_P$ on $C^\infty(P,G)^G$ is given by
   \begin{align}
   \begin{split}
          (\mu(f)\gamma)(p)&=s(f)^{-1}\circ\iota(\gamma)\circ s(f)(p)=s(f)^{-1}(s(f)(p)\gamma(s(f)(p)))=p\gamma(s(f)(p))\\
          &=\gamma\circ\sigma\circ f\circ\pi(p),
          \end{split}
   \end{align}
   for $f\in\text{Diff}(M)_P$, $\gamma\in C^\infty(P,G)^G$ and $p\in P$, where $s:\text{Diff}(M)_P\to\text{Aut}(P)$ is a section of $\rho:\text{Aut}(P)\to\text{Diff}(M)_P$ and $\sigma:M\to P$ a section of $P$ which is smooth around $f(\pi(p))$. Note that (6) is independent of the choice of $\sigma$ and we have identified $C^\infty(P,G)^G$ with its image in $\text{Aut}(P)$. The infinitesimal action of $\mu$ for $X\in\mathfrak X(M)$, $\gamma\in C^\infty(P,G)^G$, and $p\in P$ is given by
   \begin{align}
   \begin{split}
      (\mu_{*,1_{\text{Aut}(P)}}(X)\gamma)(p)&=\frac d{dt}\bigg|_{t=0}(\mu(\varphi_X(-,t))\gamma)(p)=\frac d{dt}\bigg|_{t=0}\gamma\circ\sigma\circ \varphi_X(-,t)\circ \pi(p)\\
      &=\gamma_{*,\sigma(\pi(p))}\sigma_{*,\pi(p)}(X_{\pi(p)}) \\
      &=\gamma_{*,p}(\tilde X_p),
      \end{split}
   \end{align}
   where $\varphi_X(-,t)$ is the flow of $X$ and $\tilde X_p$ is a lift of $X_{\pi(p)}$ in $T_pP$. We need to show that (7) is independent of the choice of the lift.  Indeed, let $\tilde X'_p$ be another lift in $T_pP$. Then, since
   \begin{align*}
          \gamma_{*,p}(\tilde X_p-\tilde X_p')=\gamma_{*,p}{j_p}_{*,1_G}V=\frac d{dt}\bigg|_{t=0} \gamma(pc_V(t))=\frac d{dt}\bigg|_{t=0}\gamma(p)=0,
   \end{align*}
   where $c_V(t)$ is a curve in $G$ with initial vector $V$, we see that (7) is independent of the choice of the lift with respect to a fixed $p$. Note that since $\tilde X_p-\tilde X'_p$ is vertical, there exists $V\in\mathfrak g$ such that ${j_p}_{*,1_G}V=\tilde X_p-\tilde X_p'$, where $j_p:G\to P$ is given by
   \begin{align*}
      g\mapsto pg.
   \end{align*}
   If $\tilde X''_{pg}$ is another lift in $T_{pg}P$, then since ${r_g}_*\tilde X_p$ is also a lift of $X$ in $T_{pg}P$, we see that
   \begin{align*}
     \gamma_{*,pg}(\tilde X''_{pg})=\gamma_{*,pg}({r_g}_*\tilde X_p)=\frac d{dt}\bigg|_{t=0} \gamma(c_{\tilde X_p}(t)g)=\frac d{dt}\bigg|_{t=0}\gamma(c_{\tilde X_p}(t))=\gamma_{*,p}(\tilde X_p),
   \end{align*}
   where $c_{\tilde X_p}(t)$ is a curve in $P$ with initial vector $\tilde X_p$. Therefore, (5) is independent of the choice of the lift.
\end{example}

\section{Theory}

\begin{theorem}
\normalfont
  Let $\tau:\text{Aut}(P)\to C^\infty(P,G)^G$ be a conenction cochain of abelian extension with respect to (3) and $\tau_{*,1_{\text{Aut}(P)}}$ its differential map over $1_{\text{Aut}(P)}$. If for all $X\in\mathfrak X(P)^r$ with $X_p=0$, 
  \begin{align}
      \tau_{*,1_{\text{Aut}(P)}}(X)(p)=0,
  \end{align}
  then ${l_{\tau(1_{\text{Aut}(P)})^{-1}}}_* \tau_{*,1_{\text{Aut}(P)}}$ is a connection $1$-form over $P$.
\end{theorem}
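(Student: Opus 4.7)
The plan is to show that the map
\[ \Psi \;:=\; {l_{\tau(1_{\text{Aut}(P)})^{-1}}}_* \circ \tau_{*,1_{\text{Aut}(P)}} \;:\; \mathfrak X(P)^r \to C^\infty(P,\mathfrak g)^G \]
descends to a well-defined $\mathfrak g$-valued $1$-form $\omega$ on $P$ via the assignment $\omega_p(X_p):=\Psi(X)(p)$ for $X\in\mathfrak X(P)^r$, and then to verify the two defining properties of a connection $1$-form for $\omega$. Since ${l_{\tau(1_{\text{Aut}(P)})^{-1}}}_*$ is a pointwise linear isomorphism between the relevant tangent spaces of $C^\infty(P,G)^G$, the hypothesis $\tau_{*,1_{\text{Aut}(P)}}(X)(p)=0$ whenever $X_p=0$ upgrades immediately to $\Psi(X)(p)=0$ whenever $X_p=0$. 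Combined with the $\mathbb R$-linearity of $\Psi$, this says that $\Psi(X)(p)$ depends only on $X_p$, which is exactly the tensoriality needed for $\omega$ to be well-defined.

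The first substantive step is to show that every $v\in T_pP$ arises as $X_p$ for some $X\in\mathfrak X(P)^r$. I would define $X$ along the orbit through $p$ by $X_{pg}:={r_g}_* v$, which is automatically right-invariant along the fibre. Then I would extend $X$ over a neighbourhood by choosing a local section $\sigma:U\to P$ of $\pi$ with $\sigma(\pi(p))=p$, transporting a smooth extension of $v$ along $\sigma$, and spreading it over $\pi^{-1}(U)$ via the right $G$-action. Multiplying by the pullback of a bump function on $M$ supported in $U$ produces a global $X\in\mathfrak X(P)^r$ with $X_p=v$. Carrying out this construction smoothly in the base variable on any chart of $M$ simultaneously shows that $\omega$ is smooth.

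Once $\omega$ is in hand, the two connection axioms fall out of the setup. For equivariance under the right $G$-action: since $\Psi(X)\in C^\infty(P,\mathfrak g)^G$ satisfies $\Psi(X)(pg)=\Psi(X)(p)$ and $X\in\mathfrak X(P)^r$ satisfies $X_{pg}={r_g}_* X_p$, we obtain $\omega_{pg}({r_g}_* v)=\Psi(X)(pg)=\Psi(X)(p)=\omega_p(v)$; since $G$ is abelian, $\mathrm{Ad}(g^{-1})=\mathrm{id}$, and this is precisely the equivariance $r_g^*\omega=\mathrm{Ad}(g^{-1})\omega$. For the reproduction axiom on fundamental vector fields: for $V\in\mathfrak g$, abelianness of $G$ ensures that the fundamental vector field $V^*$ lies in $\mathfrak X(P)^r$ and equals $\iota_*(\gamma_V)$, where $\gamma_V\equiv V$ is the constant map. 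The cochain identity applied at $\tilde g=1_{\text{Aut}(P)}$ gives $\tau\circ\iota=l_{\tau(1_{\text{Aut}(P)})}$, so by the chain rule $\tau_{*,1_{\text{Aut}(P)}}\circ\iota_*={l_{\tau(1_{\text{Aut}(P)})}}_*$, and composing on the left with ${l_{\tau(1_{\text{Aut}(P)})^{-1}}}_*$ yields $\Psi\circ\iota_*=\mathrm{id}$. Evaluating at $\gamma_V$ gives $\omega_p(V^*_p)=\gamma_V(p)=V$.

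The main obstacle I anticipate is the first step: producing smooth global right-invariant extensions of arbitrary tangent vectors in a way that yields smoothness of $\omega$ on $P$. One must combine horizontal and vertical components in a local trivialisation without destroying right-invariance when patching with base-level bump functions. The remainder of the argument is a direct diagram chase using the chain rule, the cochain identity $\tau(\tilde g a)=\tau(\tilde g)+a$ specialised at $\tilde g=1_{\text{Aut}(P)}$, and the $G$-invariance built into the codomain $C^\infty(P,\mathfrak g)^G$.
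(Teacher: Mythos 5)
Your proposal is correct and follows essentially the same route as the paper: right-invariant extension of a tangent vector plus the vanishing hypothesis and linearity give tensoriality, the $G$-invariance of values in $C^\infty(P,\mathfrak g)^G$ together with right-invariance of the extension gives equivariance (with $\mathrm{Ad}$ trivial because $G$ is abelian), and the cochain identity applied to $\iota(\exp(tV))$ gives $\omega(\underline V_p)=V$. Your packaging of the last step as the differentiated identity $\tau\circ\iota=l_{\tau(1_{\text{Aut}(P)})}$, hence $\Psi\circ\iota_*=\mathrm{id}$, and your explicit extension and smoothness construction are simply cleaner or more detailed renderings of steps the paper carries out via the flow computation or merely asserts.
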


\begin{proof}
  Recall that any tangent vector $X_p\in T_pP$ could be extended to a right-invariant vector field $X$ over $P$. Then, ${l_{\tau(1_{\text{Aut}(P)})^{-1}}}_* \tau_{*,1_{\text{Aut}(P)}}$ is regarded as a $\mathfrak g$-valued in the sense that
  \begin{align*}
        {l_{\tau(1_{\text{Aut}(P)})^{-1}}}_* \tau_{*,1_{\text{Aut}(P)}}(X_p)={l_{\tau(1_{\text{Aut}(P)})^{-1}}}_* \tau_{*,1_{\text{Aut}(P)}}(X)(p).
  \end{align*}
  The condition (8) ensures that it is well-defined. Now, we check that ${l_{\tau(1_{\text{Aut}(P)})^{-1}}}_* \tau_{*,1_{\text{Aut}(P)}}$ is a connection $1$-form. Let $V\in\mathfrak g$ and $\underline V$ its fundamental vector field over $P$. Since $G$ is abelian, we see that $\underline V$ is rignt-invariant. It follows that
  \begin{align*}
    {l_{\tau(1_{\text{Aut}(P)})^{-1}}}_* \tau_{*,1_{\text{Aut}(P)}}(\underline V_p)={l_{\tau(1_{\text{Aut}(P)})^{-1}}}_* \tau_{*,1_{\text{Aut}(P)}}(\underline V)(p)=\frac d{dt}\bigg|_{t=0}\tau(1_{\text{Aut}(P)})^{-1} \tau(\varphi_{\underline V}(-,t))(p),
  \end{align*}
  where $\varphi_{\underline V}(-,t)$ is the flow of $\underline V$. It's easy to see that
  \begin{align*}
    \varphi_{\underline V}(-,t)=(-)\exp(tV),
  \end{align*}
  since
  \begin{align*}
        \frac d{dt}\bigg|_{t=s} (-)\exp(tV)=\frac d{dt}\bigg|_{t=0} (-)\exp(sV)\exp(tV)=\underline V_{(-)\exp(sV)}.
  \end{align*}
  It follows that
  \begin{align*}
      {l_{\tau(1_{\text{Aut}(P)})^{-1}}}_* \tau_{*,1_{\text{Aut}(P)}}(\underline V_p)&=\frac d{dt}\bigg|_{t=0}\tau(1_{\text{Aut}(P)})^{-1} \tau((-)\exp tV)(p) \\
      &=\frac d{dt}\bigg|_{t=0} \tau(1_{\text{Aut}(P)})^{-1}\tau(1_{\text{Aut}(P)}\circ\iota(\exp(tV)))(p),
  \end{align*}
  where we have regarded $\exp(tV)$ as a constant function in $C^\infty(P,G)^G$ and $\iota(\exp(tV))(-)=(-)\exp(tV)$. Thus, by the definition of connection cochain, we have
  \begin{align*}
       {l_{\tau(1_{\text{Aut}(P)})^{-1}}}_* \tau_{*,1_{\text{Aut}(P)}}(\underline V_p)&=\frac d{dt}\bigg|_{t=0} \tau(1_{\text{Aut}(P)})^{-1}\tau(1_{\text{Aut}(P)})\exp(tV)(p) \\
       &=\frac d{dt}\bigg|_{t=0}\exp(tV)(p) \\
       &=\frac d{dt}\bigg|_{t=0}\exp(tV) \\
       &=V.
  \end{align*}
  Moreover, we have
  \begin{align*}
      {l_{\tau(1_{\text{Aut}(P)})^{-1}}}_* \tau_{*,1_{\text{Aut}(P)}}({r_g}_*X_p)&= {l_{\tau(1_{\text{Aut}(P)})^{-1}}}_* \tau_{*,1_{\text{Aut}(P)}}(X)(pg)\\
      &= {l_{\tau(1_{\text{Aut}(P)})^{-1}}}_* \tau_{*,1_{\text{Aut}(P)}}(X)(p)\\
      &= {l_{\tau(1_{\text{Aut}(P)})^{-1}}}_* \tau_{*,1_{\text{Aut}(P)}}(X_p)
  \end{align*}
  where $X$ is the right-invariant vector field extending $X_p$, which implies that $X_{pg}={r_g}_*X_p$. Therefore,  ${l_{\tau(1_{\text{Aut}(P)})^{-1}}}_* \tau_{*,1_{\text{Aut}(P)}}$ is a connection $1$-form.
\end{proof}

\end{document}